\newtheorem{prop}{Proposition}[section] 
\newtheorem{thm}[prop]{Theorem} 
\newtheorem{cor}[prop]{Corollary}
\theoremstyle{remark} 
\newtheorem{rem}[prop]{Remark}
\newtheorem{ex}[prop]{Example}
\theoremstyle{definition}
\newcommand{\mz}{{\mathbb Z}}
\newcommand{\qform}[1]{{\langle{#1}\rangle}} 
\newcommand{\pform}[1]{{\langle\!\langle{#1}\rangle\!\rangle}} 
\DeclareMathOperator{\End}{End}
\DeclareMathOperator{\id}{Id}
\DeclareMathOperator{\Int}{Int}
\DeclareMathOperator{\Nrd}{Nrd}
\newcommand{\Spin}{\mathrm {Spin}}
\DeclareMathOperator{\ad}{ad} 
\DeclareMathOperator{\Ad}{Ad}
\newcommand{\ba}{\overline{\rule{2.5mm}{0mm}\rule{0mm}{4pt}}} 
\title[Decomposability of orthogonal involutions in degree
$12$]{Decomposability of orthogonal involutions in degree $12$}  
\author[A.~Qu\'eguiner-Mathieu]{Anne Qu\'eguiner-Mathieu}
\address{LAGA, UMR 7539\\ CNRS\\ Universit\'e Paris 13 - Sorbonne Paris Cit\'e\\ Universit\'e Paris 8\\ F-93430 Villetaneuse, France}
\email{queguin@math.univ-paris13.fr}
\author[J.-P.~Tignol]{Jean-Pierre Tignol}
\address{UCLouvain\\ Institut ICTEAM, Box L4.05.01\\ B-1348
  Louvain-la-Neuve, Belgique}
\email{jean-pierre.tignol@uclouvain.be}
\thanks{The second author acknowledges support from the Fonds de la
  Recherche Scientifique--FNRS under grant n$^\circ$~J.0159.19.}
\date{10 July 2019}
\keywords{Algebra with involution, hermitian form, cohomological
  invariant} 
\subjclass[2010]{Primary 11E72; Secondary 16W10, 11E81}
\begin{document}

\maketitle
\begin{abstract} 
A theorem of Pfister asserts that every
$12$-dimensional quadratic form with trivial discriminant and trivial
Clifford invariant over a field of characteristic different from~$2$
decomposes as a tensor product of a binary quadratic form and a
$6$-dimensional quadratic form with trivial discriminant.
The main result of the paper extends Pfister's result to orthogonal involutions : every central simple algebra of degree~$12$ with orthogonal involution
of trivial discriminant and trivial Clifford invariant decomposes into
a tensor product of a quaternion algebra and a central simple algebra
of degree~$6$ with orthogonal involutions. This decomposition is used
to establish a criterion for the existence of orthogonal involutions
with trivial invariants on algebras of degree~$12$, and to calculate
the $f_3$-invariant of the involution if the algebra has
index~$2$.
\end{abstract}

Every semi-simple algebraic group of classical type can be described in terms of a central simple algebra with involution, except for groups of type $D$ in characteristic $2$, where the involution should be replaced by a so-called quadratic pair~\cite[\S 26]{KMRT}. When the base field has characteristic $0$, this was first observed by Weil~\cite{W} in the 60's for adjoint groups. In particular, over a field of characteristic different from $2$, groups of type $D_n$ are quotients of the $\Spin$ group of a degree $2n$ algebra with orthogonal involution. If the algebra is the endormophism ring of some $2n$-dimensional vector-space $V$, the involution is adjoint to a quadratic form $q$ defined on $V$, unique up to a scalar factor, and the corresponding groups are quotients of the $\Spin$ group of this quadratic form. 

Algebraic groups of low rank, and the corresponding algebras with involution, which have degree $\leq 14$, play a special role in the theory. Indeed, these groups have specific properties, which in turn produce efficient tools to study and describe the underlying algebraic objects. In particular, we may mention the so-called exceptional isomorphisms, with consequences on algebras with involution explored in~\cite[\S\,15]{KMRT}, triality, that is the action of the symmetric group in three letters on the Dynkin diagram $D_4$, see~\cite[Chapter X]{KMRT}, and the existence of an open orbit for some representations of algebraic groups of low rank, allowing to view torsors under those groups as torsors under the stabilizer, see Garibaldi~\cite[Th.~9.3]{Ga:Lens}. 

Even though they were first studied independently, these facts are related to the classification theorems describing quadratic forms of even dimension $\leq 12$ with trivial discriminant and trivial Clifford invariant, which were proved by Pfister in 1966~\cite{Pfister}, see also~\cite[Th.~8.1.1]{Kahn}. It appears that those forms always contain a nontrivial subform of even dimension and trivial discriminant, and admit a diagonalisation of a special shape, depending on the dimension of the form. In particular, the number of parameters required to describe such a form in general is less than what one may expect in view of the dimension. An analogous statement was obtained by Rost~\cite{Rost}, more than thirty years later, for quadratic forms of dimension $14$ (see also~\cite[Th.~21.3]{Ga:Lens}), based on the representation argument mentioned above. From the point of view of algebraic groups, it is clear that those results do not extend to higher dimensional quadratic forms. This was formally proved by Merkurjev and Chernousov in~\cite{CM}, where they compute the essential dimension of a split spinor group $\Spin_n$, for $n\geq 15$. 
Roughly speaking, since torsors under $\Spin_n$ are closely related to $n$-dimensional quadratic forms with trivial discriminant and trivial Clifford invariant, this essential dimension provides a measure of the number of parameters required to describe such a form in general. It follows from this computation that a general quadratic form of dimension $\geq 15$ does not contain a subform of a given dimension and with trivial discriminant, with two possible exceptions (see~\cite[Th.~4.2]{CM} for a precise statement).

As opposed to this, Pfister's theorem does extend to algebras with orthogonal involutions. This was already known in dimension $\leq 10$, and partial results in dimension $12$ were discussed in~\cite{GaQ} and~\cite{QT:Peyre}. The main result of this paper is Theorem~\ref{Pfister.thm}, which is an improved version of these
dimension $12$ analogues, obtained by using the descent theorem
for unitary involutions in degree~$6$ proven
in~\cite[Th.~1.3]{QT:ext}. 
This new statement is closer to Pfister's original result, which asserts that every
$12$-dimensional quadratic form with trivial discriminant and trivial
Clifford invariant over a field of characteristic different from~$2$
decomposes as a tensor product of a binary quadratic form and a
$6$-dimensional quadratic form with trivial discriminant. 

As a consequence, we characterize in
Corollary~\ref{cor:existence} the biquaternion $F$-algebras $D$ such
that the matrix algebra $M_3(D)$ carries an orthogonal involution with
trivial discriminant and trivial Clifford algebra. This property turns
out to hold for every biquaternion $F$-algebra if the
$2$-cohomological dimension of $F$ is at most~$2$; we show in
Example~\ref{ex:existence} that it fails for certain totally ramified
biquaternion $F$-algebras.

Another use of Theorem~\ref{Pfister.thm} is for the computation of a
certain cohomological invariant. Recall from~\cite{QT:Peyre} that a
cohomological invariant of degree~$3$ for orthogonal involutions 
with trivial discriminant and trivial Clifford invariant is defined
on the model of the Arason invariant $e_3$ of
quadratic forms. The generalized Arason invariant takes its values in
a quotient of the third Galois cohomology group of $\mu_4^{\otimes2}$;
taking the square of a representative yields an invariant $f_3$ with
values in the cohomology of $\mu_2$. We show in Theorem~\ref{f3.thm}
how this invariant can be calculated from a tensor product
decomposition afforded by Theorem~\ref{Pfister.thm}.
\medbreak

Throughout, $F$ is a field of characteristic different from~$2$, and
$(A,\sigma)$ is a central simple $F$-algebra with orthogonal
involution. A possible characterization of $(A,\sigma)$ is the existence of a 
finite Galois extension $L/F$ and a quadratic space $(V,\varphi)$ over $L$ such that 
\[A\otimes_F L\simeq \End_L(V)\mbox{ and }\sigma\otimes\id=\ad_\varphi\] where $\ad_\varphi$ is the involution 
adjoint to $\varphi$ (or, more
precisely, to its polar bilinear form). 
We generally follow the notation used in \cite{KMRT}, to
which we refer for background information on involutions on central
simple algebras. In particular, for any field $K$ containing $F$, we
write $(A,\sigma)_K$ for the $K$-algebra with involution $(A\otimes_F K,\sigma\otimes\id)$. If $\varphi$ is a (nondegenerate)
quadratic form on some $F$-vector space $V$, we write $\Ad_\varphi$ for
$(\End_F(V),\ad_\varphi)$. The discriminant of a quadratic form and the  even part of its Clifford algebra, which are invariant under similitudes, and may therefore be considered as invariants of the involution $\ad_\varphi$, extend to non-split algebras with orthogonal involution~\cite[\S 7,8]{KMRT}.

For $i\geq1$, we let $H^i(F)$ denote the Galois cohomology group
$H^i(F,\mu_2)$ and identify $H^1(F)$ with $F^\times/F^{\times2}$
(written additively) and $H^2(F)$ with the $2$-torsion subgroup of the
Brauer group of $F$. For $a\in F^\times$ and $A$ a central simple
$F$-algebra of exponent~$1$ or $2$, we write $(a)$ for the
square-class of $a$ and $[A]$ for the Brauer class of $A$. For every
orthogonal involution $\sigma$ on a central simple $F$-algebra $A$ of
even degree, we
let $e_1(\sigma)\in H^1(F)$ denote the discriminant of $\sigma$. If
$e_1(\sigma)=0$, the Clifford invariant $e_2(\sigma)\in
H^2(F)/\{0,[A]\}$ is the coset represented by any of the two
components of the Clifford algebra $C(A,\sigma)$. 

\section{Decomposability}

Our first decomposition result does not require triviality of the
Clifford invariant. It is premised instead on the existence of a
quadratic extension making the involution hyperbolic, i.e., adjoint to
a hyperbolic hermitian form.

\begin{prop} 
\label{dec.prop}
Let $(A,\sigma)$ be a central simple $F$-algebra with orthogonal
involution of degree~$12$, and let $K=F(\sqrt d)$ be a quadratic field
extension of $F$. If $A$ is split, assume additionally that $\sigma$
is not adjoint to a quadratic form of odd Witt index.
\begin{enumerate}
\item[(i)]
  The algebra with involution $(A,\sigma)_K$ is hyperbolic if and only
  if $(A,\sigma)$ decomposes as
  \[
    (A,\sigma) = (A_0,\sigma_0)\otimes(H,\rho)
  \]
  where $(A_0,\sigma_0)$ is a central simple algebra with orthogonal
  involution of degree~$6$ and $(H,\rho)$ is a quaternion algebra with
  orthogonal involution of discriminant~$(d)$.
\item[(ii)]
  The algebra with involution $(A,\sigma)_K$ is split and hyperbolic
  if and only if $(A,\sigma)$ decomposes as
  \[
    (A,\sigma)={\Ad_\varphi}\otimes (H,\rho)
  \]
  where $\varphi$ is a quadratic form of dimension~$6$ and $(H,\rho)$
  is a quaternion algebra with orthogonal involution of
  discriminant~$(d)$. 
\end{enumerate}
\end{prop}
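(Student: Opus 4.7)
The plan is the following. The ``if'' direction of both parts is immediate: a quaternion $(H,\rho)$ with orthogonal involution of discriminant $(d)$ is adjoint to the $2$-dimensional form $\langle 1,-d\rangle$, the norm form of $K/F$, so $(H,\rho)_K$ is hyperbolic (and split); hyperbolicity and splitness are inherited by tensor products with any other factor.

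For the ``only if'' direction of~(i), the main task will be to produce an element $v\in A$ with $v^2=d$ and $\sigma(v)=-v$, i.e., an embedding $K\hookrightarrow A$ on which $\sigma$ restricts to the nontrivial automorphism $\gamma$ of $K/F$. Such a $v$ corresponds to a hyperbolic idempotent $e=\tfrac12+\tfrac1{2\sqrt d}v\in A_K$ satisfying \emph{both} $\sigma_K(e)=1-e$ and $\gamma(e)=1-e$. Hyperbolicity of $\sigma_K$ supplies some idempotent $e_0$ with $\sigma_K(e_0)=1-e_0$, and the first step is to adjust $e_0$ so that it is also Galois-symmetric. To do this I would analyze the maximal totally isotropic right ideal $I=e_0A_K$ and its Galois conjugate $\gamma(I)$: if $I\cap\gamma(I)\ne 0$, then this intersection descends to a totally isotropic ideal of $A$, so $\sigma$ is isotropic over $F$ and I reduce by Witt cancellation to a smaller-degree situation; otherwise $I\cap\gamma(I)=0$, whence $A_K=I\oplus\gamma(I)$ by a dimension count, and the associated projector is automatically Galois-symmetric and produces the required~$v$. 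The odd-Witt-index exclusion in the split case is exactly the hypothesis needed for this construction to go through.

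With $v\in A$ in hand, I would set $B:=C_A(K)$, a central simple $K$-algebra of degree~$6$ on which $\sigma$ restricts to a $K/F$-unitary involution~$\tau$. Conjugation by $v$ defines a $\gamma$-semilinear involution $\alpha(b)=vbv^{-1}$ of $B$, squaring to the identity because $v^2\in F^\times$. Galois descent then yields an $F$-central simple algebra $A_0:=B^\alpha$ of degree~$6$ with $B=A_0\otimes_F K$. The relation $\sigma(v)=-v$ forces $\tau\alpha=\alpha\tau$ on~$B$, so $\tau$ preserves $A_0$ and $\sigma_0:=\tau|_{A_0}$ is an orthogonal involution. By the double centralizer theorem, $H:=C_A(A_0)$ is an $F$-quaternion algebra containing $K$ and stable under $\sigma$; its involution $\rho:=\sigma|_H$ is orthogonal of discriminant~$(d)$, and $(A,\sigma)=(A_0,\sigma_0)\otimes(H,\rho)$ is then immediate.

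For~(ii), I would start from the decomposition of~(i) and use that $A_K$ being split together with $H_K$ being split (since $K\subset H$) forces $(A_0)_K$ to be split. Since $\deg A_0=6$ and $[A_0]$ is killed by the quadratic extension $K/F$, the index of $A_0$ must divide~$2$. If $A_0$ is split, the desired decomposition is the one already produced. Otherwise $A_0\cong M_3(Q_0)$ for a quaternion $Q_0$ containing~$K$, and a rearrangement lemma on degree-$4$ orthogonal involutions of trivial discriminant — analyzing the tensor product of two orthogonal quaternion involutions of common discriminant $(d)$ — lets me absorb $Q_0$ into the quaternion factor, replacing the decomposition by $\Ad_\varphi\otimes(H',\rho')$ with $\varphi$ a $6$-dimensional quadratic form and $(H',\rho')$ a new quaternion of discriminant~$(d)$. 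The main obstacle throughout is the production of the Galois- and $\sigma$-compatible hyperbolic idempotent (equivalently, the skew square root of $d$ in~$A$); once the embedding $K\hookrightarrow A$ is secured, the rest is a formal descent and double-centralizer argument.
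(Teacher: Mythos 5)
Your reduction to producing a skew-symmetric $v\in A$ with $v^2=d$ is sound (this is exactly the content of the cited theorem of Bayer-Fluckiger, Shapiro and Tignol, and the odd-Witt-index exclusion in the split case is indeed where that theorem needs the hypothesis). The gap is in the descent step that follows. You set $B=C_A(K)$ and claim that $\alpha(b)=vbv^{-1}$ is a $\gamma$-semilinear involution of $B$. But $v\in K\subset B$, so $v$ is \emph{central} in $B$, and $\alpha$ is the identity map --- it is certainly not $\gamma$-semilinear, and $B^\alpha=B$ is not an $F$-form of $B$. To obtain a genuine descent datum you would need an element of $A^\times$ that normalizes $K$ and induces $\gamma$ on it (such elements exist by Skolem--Noether, as they must conjugate $v$ to $-v$), but then you face two further problems: you must arrange its square to lie in $F^\times$ and, crucially, you must arrange it to commute with $\tau=\sigma|_B$ so that the unitary involution descends along with the algebra. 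None of this is automatic, and in higher degree it can genuinely fail.

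This is precisely where the paper invokes the descent theorem for unitary involutions in degree at most~$6$ (\cite[Th.~1.3]{QT:ext}), which directly produces $(B,\tau)=(A_0,\sigma_0)\otimes_F(K,\iota)$ with $(A_0,\sigma_0)$ orthogonal over $F$. The introduction singles this out as \emph{the} new ingredient that upgrades the earlier partial results of \cite{GaQ} and \cite{QT:Peyre} to the present statement, so it cannot be replaced by a formal Galois-descent computation. Once $(A_0,\sigma_0)$ is in hand, your double-centralizer argument for $H$ and $\rho$ is correct and matches the paper.

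For part~(ii), your plan (start from~(i) and ``absorb'' the quaternion $Q_0$ Brauer-equivalent to $A_0$ into $H$ via a rearrangement of a degree-$4$ orthogonal tensor product) would require a lemma you have not supplied, and you would still have to check that the resulting $6$-dimensional factor is split \emph{with the right involution}, i.e.\ adjoint to a quadratic form. The paper instead redoes the argument of~(i) from the start: since $A_K$ is split, $B\cong\End_K(V)$ and $\tau=\ad_h$ for a hermitian form $h$; an orthogonal $K$-basis of $(V,h)$ spans an $F$-subspace $V_0$ on which $h$ restricts to a symmetric bilinear form $\varphi$, and $(A_0,\sigma_0)=\Ad_\varphi$. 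This is both shorter and avoids the extra rearrangement.

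Your treatment of the ``if'' directions is fine, and the overall architecture (embed $K$, centralize, descend, double-centralize) is the right one; the missing piece is that the descent of the unitary involution in degree~$6$ is a theorem, not a formality.
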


\begin{proof}
  (i)
The condition is obviously sufficient, since $(H,\rho)_K$ is
hyperbolic.  
Assume conversely that $(A,\sigma)_K$ is
hyperbolic. By~\cite[Th.~3.3]{BST}, this means $A$ contains a
skew-symmetric element $\delta$ 
with square $d$. Writing $\iota$ for the nontrivial automorphism of
$K$, we may then identify $(K,\iota)$ with a subalgebra of 
$(A,\sigma)$. Let $B$ be the centralizer of $K$ in $A$. The involution
$\sigma$ induces an involution $\tau$ of $B$, which restricts to
$\iota$ on $K$.  Hence by the descent theorem
of~\cite[Th.~1.3]{QT:ext}, $(B,\tau)=(A_0,\sigma_0)\otimes_F
(K,\iota)$, for some algebra with orthogonal involution
$(A_0,\sigma_0)$. The centralizer of $A_0$ in $A$ is a quaternion
algebra $H$, which contains $K$, and by the double centralizer
theorem, we have $A=A_0\otimes H$. Moreover, since $A_0$ is
$\sigma$-stable, $H$ also is, and we get a decomposition  
\[
  (A,\sigma)=(A_0,\sigma_0)\otimes (H,\rho),
\]
with $\sigma_0$ and $\rho$ of orthogonal type, and $(H,\rho)\supset
(K,\iota)$. The latter inclusion shows that $e_1(\rho)=(d)$, and the
proof of~(i) is complete.

(ii)
As in~(i), the condition is sufficient because $(H,\rho)_K$ is
hyperbolic. For the converse, we modify the argument in~(i), taking
into account the additional hypothesis that $A_K$ is split. From this
hypothesis, it follows that the algebra $B$ is split, hence we may
identify $B=\End_K(V)$ for some $K$-vector space $V$, and $\tau=\ad_h$
for some hermitian form $h$ on $V$. Fix an orthogonal basis
$(e_1,\ldots,e_6)$ of $V$. The form $h$ restricts to a symmetric
bilinear form on the $F$-vector space $V_0$ spanned by $e_1$, \ldots,
$e_6$, and we may take $A_0=\End_F(V_0)$ in the proof of~(i). Thus,
$(A_0,\sigma_0)=\Ad_\varphi$ where $\varphi(x)=h(x,x)$ on $V_0$. 
\end{proof}

\begin{rem}
Let $(A,\sigma)$ be a central simple $F$-algebra with orthogonal 
involution of degree~$4m$ for some integer $m$ (excluding the case where $A$ is split and $\sigma$ is adjoint to a quadratic form of odd Witt index). We compare the following statements:
\begin{enumerate}
\item[(a)]
$(A,\sigma)=(A_0,\sigma_0)\otimes(H,\rho)$ for some quaternion algebra with orthogonal involution $(H,\rho)$;
\item[(b)]
there exists a quadratic field extension $K$ of $F$ such that $(A,\sigma)_K$ is hyperbolic;
\item[(c)]
$e_1(\sigma)=0$.
\end{enumerate}
The implication (a)~$\Rightarrow$~(b) always holds, for we may take for $K$ the subfield of $H$ generated by a skew-symmetric element. (If the skew-symmetric elements in $H$ do not generate a field, then $(H,\rho)$ is hyperbolic and (b) clearly holds.) 
The implication (b)~$\Rightarrow$~(c) can be derived from the first step in the proof of Proposition~\ref{dec.prop} as follows: if $(A,\sigma)_K$ is hyperbolic, then $(K,\iota)$ embeds in $(A,\sigma)$ by \cite[Th.~3.3]{BST}, hence $A$ contains a skew-symmetric element $\alpha$ such that $\alpha^2\in F^\times$. Let $\alpha^2=a$. The reduced norm $\Nrd_A(\alpha)$ is $(-a)^{2m}$ and by definition $e_1(\sigma)=\bigl(\Nrd_A(\alpha)\bigr)$, so $e_1(\sigma)=0$.

On the other hand, taking for $A$ an indecomposable algebra of degree~$8$ yields examples where (b) holds but (a) does not (see~\cite[Ex.~3.6]{QT}), whereas Proposition~\ref{dec.prop} shows that (a) and (b) are equivalent when $\deg A=12$. The implication (c)~$\Rightarrow$~(b) does not hold, even when $A$ is split of degree~$12$: for instance,
any quadratic form which is the orthogonal sum of a $3$-fold and a
$2$-fold Pfister form is a $12$-dimensional quadratic form with
trivial discriminant, which need not be hyperbolic over a quadratic
field extension of the base field. For an explicit example, consider
for instance $\varphi=\pi_3\oplus \pform{x,y}$ over $F=k(\!(x)\!)(\!(y)\!)$,
where $\pi_3$ is an arbitrary anisotropic $3$-fold Pfister form
over $k$.

Note also that Tao's computation in \cite{Tao} shows that when (a) holds, then $e_2(\sigma)$ is represented by $[H]+(d,d_0)$ where $e_1(\rho)=(d)$ and $e_1(\sigma_0)=(d_0)$. It is therefore easy to see that (a) does not imply $e_2(\sigma)=0$.
\end{rem}

By contrast, the condition $e_1(\sigma)=e_2(\sigma)=0$ turns
out to be sufficient for the existence of a quadratic extension $K$
such that $(A,\sigma)_K$ is hyperbolic (hence also for a decomposition as in Proposition~\ref{dec.prop}(i)) when $\deg A=12$. The following
result may be regarded as a generalization of Pfister's theorem on
$12$-dimensional quadratic forms with trivial discriminant and trivial
Clifford invariant.

\begin{thm}
\label{Pfister.thm}
Let $(A,\sigma)$ be a central simple algebra with orthogonal
involution of degree~$12$. The following conditions are equivalent:
\begin{enumerate}
\item[(a)]
  $e_1(\sigma)=e_2(\sigma)=0$;
\item[(b)]
  there exists a central simple algebra with orthogonal involution
  $(A_0,\sigma_0)$ of degree~$6$ and a quaternion algebra with
  orthogonal involution $(H,\rho)$ such that, writing $e_1(\rho)=(d)$
  and $e_1(\sigma_0)=(d_0)$,
  \[
    (A,\sigma)=(A_0,\sigma_0)\otimes(H,\rho) \quad\text{and}\quad
    H=(d,d_0).
  \]
\end{enumerate} 
\end{thm}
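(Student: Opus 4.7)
The implication $(b)\Rightarrow(a)$ is a routine computation. The discriminant of the tensor product of two orthogonal involutions on central simple algebras of even degree is always trivial (as one checks after scalar extension to a splitting field, using $\det(\phi_0\otimes\phi_H) = \det(\phi_0)^{\deg H}\det(\phi_H)^{\deg A_0}$, both exponents being even), so $e_1(\sigma)=0$ automatically. Tao's formula recalled in the preceding remark gives $e_2(\sigma) = [H] + (d,d_0)$ modulo $\{0,[A]\}$, which vanishes under the hypothesis $H = (d,d_0)$.

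For the converse, my plan is to reduce to Proposition~\ref{dec.prop}(i). The central step, and what I expect to be the main obstacle, is to produce a quadratic field extension $K = F(\sqrt{d})/F$ such that $(A,\sigma)_K$ is hyperbolic---this is precisely where the hypothesis $e_2(\sigma)=0$ must be invoked (recall from the remark that $e_1(\sigma)=0$ alone does not suffice). Since $\deg A = 12$, the canonical involution on each component of the Clifford algebra $C(A,\sigma) = C^+\times C^-$ is symplectic, and $e_2(\sigma)=0$ asserts that one of them, say $C^+$, is split. The split degree-$32$ algebra $C^+$ thus carries a hyperbolic symplectic involution, and the idea is to exploit this extra structure (possibly building on the partial dimension-$12$ results of \cite{GaQ} and \cite{QT:Peyre}) to produce a $\sigma$-skew element $\delta\in A$ with $\delta^2\in F^\times$; setting $d = \delta^2$ and appealing to \cite[Th.~3.3]{BST} then furnishes the hyperbolizing extension $K = F(\sqrt{d})$.

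With such $K$, Proposition~\ref{dec.prop}(i) yields a decomposition $(A,\sigma) = (A_0,\sigma_0)\otimes(H,\rho)$ with $e_1(\rho)=(d)$; writing $(d_0)=e_1(\sigma_0)$, Tao's formula forces $[H]+(d,d_0)\in\{0,[A]\}$. If $[H]=(d,d_0)$ in $H^2(F)$, then $H\cong(d,d_0)$ and we are done. Otherwise $[H]=(d,d_0)+[A]$, and $[A]=[A_0]+[H]$ gives $[A_0]=(d,d_0)$; a final rearrangement transferring the Brauer class $(d,d_0)$ from $A_0$ into the quaternion factor, together with a compatible redistribution of the involutions, should yield a new decomposition with quaternion factor $H'\cong(d,d_0)$. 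Carrying out this last rearrangement while preserving the orthogonal involution structure is a second delicate point, and in practice one may prefer to sharpen the first step so that $d$ is chosen judiciously from the start (ensuring directly the case $[H]=(d,d_0)$), rather than handling the case split a posteriori.
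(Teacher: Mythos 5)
Your argument for (b)$\Rightarrow$(a) is fine, and your overall plan for (a)$\Rightarrow$(b) has the right skeleton: produce a quadratic extension $K=F(\sqrt d)$ over which $\sigma$ becomes hyperbolic, apply Proposition~\ref{dec.prop}(i), then deal with the dichotomy $(d,d_0)\in\{[H],[A_0]\}$ coming from Tao's formula. However, both of the places you flag as ``delicate'' are genuine gaps that your proposal does not close, and at the first one you are also pointing in a different direction than the actual proof.

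For the production of $K$: you observe correctly that $e_2(\sigma)=0$ makes one component $C^+$ of the Clifford algebra split with a hyperbolic symplectic canonical involution, and you hope to ``exploit this extra structure'' to find a $\sigma$-skew $\delta$ with $\delta^2\in F^\times$. But no mechanism is given for passing from the symplectic involution on the degree-$32$ algebra $C^+$ back down to such an element of $A$, and this is not what the existing proof does. The argument in \cite{GaQ}, which the paper invokes, is representation-theoretic: the triviality of $e_2$ implies that a half-spin representation $V$ of $\Spin(A,\sigma)$ is defined over $F$, and the key input is that for degree $12$ this group has an open orbit in $\mathbb P(V)$; Garibaldi uses a rational point near the open orbit to manufacture the quadratic extension $K$. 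Your Clifford-algebra idea, as stated, is only a heuristic and would need to be replaced by (or shown to recover) this open-orbit argument.

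For the second gap: when $(d,d_0)=[A_0]$ the paper does not ``transfer a Brauer class'' between the two tensor factors, nor does it rechoose $d$ so that this case never arises (it is not clear that is possible). Instead it observes that in this case $K$ splits $A$, so by Proposition~\ref{dec.prop}(ii) one may take $(A_0,\sigma_0)=\Ad_\varphi$ with $\varphi$ a $6$-dimensional form, and $(A,\sigma)\simeq\Ad_h$ for a rank-$6$ skew-hermitian form $h=\qform{\lambda_1 q,\dots,\lambda_6 q}$ over $(H,\ba)$ with $q$ a pure quaternion and $F(q)\simeq K$. Picking $u\in H^\times$ anticommuting with $q$ and setting $c=u^2$ gives $\qform{q}\simeq\qform{cq}$, so one may replace $\lambda_6$ by $c\lambda_6$; the new form $\varphi'=\qform{\lambda_1,\dots,\lambda_5,c\lambda_6}$ has $e_1(\varphi')=(cd_0)$, and since $[H]=(c,d)$ and $(d,d_0)=0$, the pair $(e_1(\varphi'),e_1(\rho))$ now generates $[H]$, i.e.\ condition (b) holds. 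This concrete isometry manipulation is what makes the rearrangement work, and it is missing from your sketch. As written, your proposal identifies the correct structural outline but leaves both of its load-bearing steps unproved.
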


\begin{proof}
  That (b) implies (a) follows from the computation of the
  discriminant and the Clifford algebra of decomposable algebras with
  involution, see \cite[(7.3)]{KMRT} and~\cite{Tao}.

  The first part of the argument for the converse is borrowed from ~\cite{GaQ}. More precisely, assume condition (a) holds. Then one of the half-spin representations $V$ of $\Spin(A,\sigma)$ is defined over $F$. By a classical result in representation theory, since the degree of $A$ is $12$, $\Spin(A,\sigma)$ has an open orbit in ${\mathbb P}(V)(F_{\mathrm {alg}})$, where $F_{\mathrm{alg}}$ is an algebraic closure of $F$. Using this open orbit, Garibaldi produced in (loc. cit., proof of Th.~3.1) a quadratic field extension
  $K=F(\sqrt d)$ of $F$ over which $\sigma$ is
  hyperbolic. Therefore, Proposition~\ref{dec.prop} applies and yields
  a decomposition
  \[
    (A,\sigma)=(A_0,\sigma_0)\otimes(H,\rho)
  \]
  for some algebra with orthogonal involution $(A_0,\sigma_0)$ of
  degree~$6$ and some quaternion algebra with orthogonal involution
  $(H,\rho)$ such that $e_1(\rho)=(d)$. Let
  $e_1(\sigma_0)=(d_0)$. Tao's 
  computation in~\cite{Tao} shows that the Clifford algebra of
  $(A,\sigma)$ has two components, which are Brauer-equivalent to
  $[H]+(d,d_0)$ and $[A_0]+(d,d_0)$. Therefore, the triviality of
  $e_2(\sigma)$ implies that $(d,d_0)=[H]$ or $[A_0]$. The proof is
  complete if the first equation holds.

  For the rest of the proof, assume $(d,d_0)=[A_0]$. Then $K$ splits
  $A_0$ as well as $H$, hence it splits~$A$. Therefore, by
  Proposition~\ref{dec.prop}, we may assume
  $(A_0,\sigma_0)=\Ad_\varphi$ for some $6$-dimensional quadratic form
  $\varphi$. Let $\qform{\lambda_1,\ldots,\lambda_6}$ be a
  diagonalization of $\varphi$ and let $q\in H$ be such that
  $\rho(x)=q\overline{x}q^{-1}$ for $x\in H$. Then
  $(d_0)=(-\lambda_1\cdots\lambda_6)$, $F(q)\simeq K$, and
  $(A,\sigma)\simeq\Ad_h$ for the skew-hermitian form
  $h=\qform{\lambda_1q,\ldots,\lambda_6q}$. Let $u\in H^\times$ be a
  quaternion that anticommutes with $q$, and let $c=u^2\in
  F^\times$. Then $[H]=(c,d)$ and
  \[
    \overline{ux}\cdot q\cdot ux = \overline{x}\cdot cq\cdot x
    \qquad\text{for $x\in H$},
  \]
  hence the skew-hermitian forms $\qform{q}$ and $\qform{cq}$ are
  isometric. Therefore, 
  \[
    h\simeq\qform{\lambda_1q,\ldots,\lambda_5q,c\lambda_6q} \simeq
    \varphi'\otimes\qform{q}
    \quad\text{for
  $\varphi'=\qform{\lambda_1,\ldots,\lambda_5,c\lambda_6}$,}
  \]
  and we have another decomposition
  \[
    (A,\sigma)\simeq{\Ad_{\varphi'}}\otimes(H,\rho),
    \qquad\text{with $e_1(\varphi')=(cd_0)$.}
  \]
  Since $(d,d_0)=[A_0]=0$ and $[H]=(c,d)$, it follows that
  $\bigl(e_1(\varphi'), e_1(\rho)\bigr)=[H]$, hence the latter
  decomposition satisfies the conditions in~(b). 
\end{proof}

To emphasize the analogy between Theorem~\ref{Pfister.thm} and
Pfister's result in \cite[pp.~123--124]{Pfister}, we derive an
additive decomposition of 
$(A,\sigma)$ from the multiplicative decomposition in
Theorem~\ref{Pfister.thm}(b). Since $\deg A_0=6$ and $2[A_0]=0$, there
is a quaternion algebra $H'$ such that $A_0\simeq M_3(H')$. The
involution $\sigma_0$ is adjoint to some skew-hermitian form $h$ over
$(H',\ba)$. Pick a diagonalization $h=\qform{q_1,q_2,q_3}$, for some
pure quaternions $q_i\in H'$. Denote $a_i=q_i^2$, and consider $b_i\in
F^\times$ for $i=1$, $2$, $3$ such that
$H'=(a_1,b_1)=(a_2,b_2)=(a_3,b_3)$. Since $e_1(\sigma_0)=d_0$, we have
$(a_1a_2a_3)=(d_0)$. The algebra with involution $(M_3(H'),\ad_h)$ is
an orthogonal sum of the $(H',\rho_i)$, where $\rho_i=\Int(q_i)\circ
\ba$ has discriminant $a_i$. This yields an additive decomposition of
$(A,\sigma)$, namely (in the notation of \cite[\S3.1]{QT:Peyre})
\begin{equation}
  \label{eq:addec}
  (A,\sigma)\in\boxplus_{i=1}^3 (H',\rho_i)\otimes (H,\rho).
\end{equation}
Each term in this decomposition is a central simple algebra of
degree~$4$ with orthogonal involution of trivial discriminant. It can
be rewritten as a tensor product of two quaternion algebras with
canonical involution
\begin{equation}
  \label{eq:chgbase}
  (H',\rho_i)\otimes(H,\rho) \simeq (H_i,\ba)\otimes (Q_i,\ba)
\end{equation}
with $H_i=(a_id_0,d)$ and $Q_i=(a_i,b_id)$. (This follows from a
calculation of Clifford algebras or, more elementarily, from a
suitable choice of base change.) We thus recover the decomposition in
Corollary~3.3 of \cite{QT:Peyre}.

If $A$ is split, hence $(A,\sigma)=\Ad_\psi$ for some $12$-dimensional
form $\psi$ of trivial discriminant and Clifford invariant, then
$H\simeq H'$, hence each term on the right side of~\eqref{eq:addec}
can be written as $\Ad_{\pi_i}$ for some $2$-fold Pfister form
$\pi_i$, and~\eqref{eq:addec} yields
\begin{equation}
  \label{eq:addec2}
  \psi\simeq \qform{\alpha_1}\pi_1 \perp \qform{\alpha_2}\pi_2 \perp
  \qform{\alpha_3}\pi_3
\end{equation}
for some $\alpha_1$, $\alpha_2$, $\alpha_3\in F^\times$. We thus get a
decomposition of $\psi$ as in~\cite[p.~124]{Pfister}. Note moreover
that each summand $(H',\rho_i)\otimes (H,\rho)$ becomes hyperbolic
over $K=F(\sqrt d)$, hence $\pi_i\simeq\pform{\beta_i,d}$ for some
$\beta_i\in F^\times$. Since
$e_2(\psi)=e_2(\pi_1)+e_2(\pi_2)+e_3(\pi_3)=0$, we may assume
$(\beta_1\beta_2\beta_3)=0$. Equation~\eqref{eq:addec2} can be
rewritten as 
\begin{equation}
  \label{eq:Pfister}
  \psi\simeq(\qform{\alpha_1}\pform{\beta_1} \perp
  \qform{\alpha_2}\pform{\beta_2} \perp
  \qform{\alpha_3}\pform{\beta_3})\otimes \pform{d} \qquad\text{with}\quad
    (\beta_1\beta_2\beta_3)=0.
\end{equation}

\section{Applications}

\subsection{Existence of orthogonal involutions with trivial invariants}

As a corollary of Theorem~\ref{Pfister.thm}, we characterize the
biquaternion algebras $D$ such that $M_3(D)$ carries an orthogonal
involution with trivial discriminant and Clifford invariant.

\begin{cor}
  \label{cor:existence}
  Let $D$ be a biquaternion $F$-algebra. There exists an orthogonal
  involution on $M_3(D)$ having trivial discriminant and trivial
  Clifford invariant if and only if $D$ admits a decomposition into
  quaternion algebras $D=H'\otimes H$ such that the reduced norm
  $n_{H'}$ and the pure subform $n^0_H$ of the reduced norm $n_H$
  (i.e., its restriction to the pure quaternions) have a common
  nonzero value. 

  If $I^3F=0$, this condition holds for every
  biquaternion $F$-algebra $D$.
\end{cor}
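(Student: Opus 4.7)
The plan is to derive the corollary from Theorem~\ref{Pfister.thm}, translating condition~(b) of the theorem into an explicit statement about common values of reduced norm forms; the final assertion under $I^3F=0$ then follows from a cohomological argument. For the ``only if'' direction, starting from $\sigma$ on $M_3(D)$ with $e_1(\sigma)=e_2(\sigma)=0$, Theorem~\ref{Pfister.thm} produces
\[
(M_3(D),\sigma)=(A_0,\sigma_0)\otimes(H,\rho),\qquad H=(d,d_0),
\]
with $d=e_1(\rho)$ and $d_0=e_1(\sigma_0)$. Since $\deg A_0=6$ and $A_0$ has Brauer class of exponent at most $2$, one has $A_0\simeq M_3(H')$ for a quaternion algebra $H'$ (with $H'=M_2(F)$ if $A_0$ is split), and comparison of Brauer classes gives $D\simeq H'\otimes H$. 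Writing $\sigma_0=\ad_h$ for a diagonal skew-hermitian form $h=\langle q_1,q_2,q_3\rangle$ over $(H',\ba)$ with $q_i$ pure and $a_i=q_i^2$, so that $(d_0)=(a_1a_2a_3)$, multiplicativity of the reduced norm yields
\[
n_{H'}(q_1q_2q_3)=\prod_{i=1}^3(-a_i)=-a_1a_2a_3,
\]
whence $-d_0$ is represented by $n_{H'}$. The equality $H=(d,d_0)$ exhibits a pure quaternion in $H$ of square in the class of $d_0$, so $-d_0$ is also represented by $n_H^0$; thus $-d_0\in F^\times$ is a common nonzero value. The split case $A_0=M_6(F)$ is immediate since $n_{M_2(F)}$ is universal.

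For the ``if'' direction, given $D=H'\otimes H$ and a common value $\mu=n_{H'}(x_0)=n_H^0(y_0)$, I construct $\sigma_0$ on $M_3(H')$ and $\rho$ on $H$ fulfilling condition~(b) of Theorem~\ref{Pfister.thm}. For $x_0\notin F$, I factor $x_0=p_1p_2p_3$ with $p_i$ pure in $H'$: pick $p_3$ pure anticommuting with the pure part of $x_0$ (such $p_3$ exists because the pure quaternions orthogonal to a given nonzero pure element form a $2$-dimensional subspace of $(H')^0$), so that $x_0p_3^{-1}$ is, up to scalar, a pure quaternion, and iterate. For $x_0\in F$, i.e.\ $\mu\in F^{\times 2}$, take $p_1,p_2,p_3$ to be the standard generators of the pure quaternions in $H'$. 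Define $\sigma_0=\ad_h$ for $h=\langle p_1,p_2,p_3\rangle$; by construction $e_1(\sigma_0)=(p_1^2p_2^2p_3^2)=(-n_{H'}(x_0))=(-\mu)$. Since $\mu\in n_H^0$, $-\mu$ is a pure quaternion square in $H$, so $H$ admits a symbol presentation $[H]=(-\mu,s)$ for some $s\in F^\times$; picking a pure $j\in H$ with $j^2=s$ and setting $\rho=\Int(j)\circ\ba$ gives $e_1(\rho)=(s)$. Then $\bigl(e_1(\rho),e_1(\sigma_0)\bigr)=(s,-\mu)=[H]$, so condition~(b) holds and $\sigma:=\sigma_0\otimes\rho$ is the desired involution on $M_3(H')\otimes H\simeq M_3(D)$.

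Finally, under $I^3F=0$: by the Milnor conjecture (Voevodsky), $I^3F=0$ is equivalent to $H^3(F)=0$. For any biquaternion $D=(a_1,b_1)\otimes(a_2,b_2)$, the triple symbol $(a_1,a_2,b_2)\in H^3(F)$ vanishes, so by Arason's theorem $(a_2,b_2)$ is split by $F(\sqrt{a_1})$ and hence $(a_2,b_2)=(a_1,c)$ for some $c\in F^\times$; consequently $[D]=(a_1,b_1c)$ is a single quaternion Brauer class, so $D\simeq M_2(H)$ for some quaternion $H$. The decomposition $D=M_2(F)\otimes H$ with $H'=M_2(F)$ then fulfills the condition, since $n_{H'}$ is hyperbolic and universal and hence shares any nonzero value of $n_H^0$. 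The principal technical obstacle lies in the factorization of $x_0\in H'^\times\setminus F$ as a product of three pure quaternions; once that step is secured, the Brauer-class identity required by condition~(b) falls out from the natural choice of $\rho$ via the symbol presentation of $[H]$.
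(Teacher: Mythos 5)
Your proof follows the same overall strategy as the paper's: derive both directions from Theorem~\ref{Pfister.thm}, in the ``only if'' direction read $-d_0$ off a diagonalization of the skew-hermitian form on $M_3(H')$, and in the ``if'' direction factor the quaternion $x_0$ into three pure quaternions to build $h=\qform{p_1,p_2,p_3}$ and then pick $\rho$ so that $H=(d,d_0)$. The differences are in two technical sub-steps. For the factorization $x_0=p_1p_2p_3$, the paper uses a dimension count: for an arbitrary invertible pure $q_3$, the space $qq_3^{-1}H'_0\cap H'_0$ has dimension $\geq 2$ and therefore (Witt index of $n^0_{H'}$ is $\leq 1$) contains an anisotropic vector. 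You instead iterate by picking pure quaternions anticommuting with the relevant pure part; this works, but you should make explicit that the $2$-dimensional anticommuting subspace of $H'_0$ contains \emph{anisotropic} (hence invertible) vectors --- for $H'$ split this is not automatic and needs the same Witt-index-$\leq1$ observation --- and note that $x_0p_3^{-1}$ is in fact exactly pure (not merely ``up to scalar''). Also, in the scalar case $x_0\in F^\times$ the equality $(p_1^2p_2^2p_3^2)=(-n_{H'}(x_0))$ does not come from $p_1p_2p_3=x_0$ (the standard $ijk$ is $-ab$, not $x_0$); it holds because both sides equal $(-1)$ once one uses $\mu\in F^{\times2}$, so it deserves a word. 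Finally, for $I^3F=0$ the paper gives a one-line argument (every $2$-fold Pfister form, in particular $n_{H'}$, is universal) that avoids Voevodsky and works for \emph{any} decomposition $D=H'\otimes H$; your argument (index of $D$ is $\leq 2$, hence $H'=M_2(F)$ works) is also correct but invokes heavier machinery for a statement that has an elementary proof.

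Incidentally, your computation $n_{H'}(q_1q_2q_3)=-a_1a_2a_3$ cleanly yields $-d_0$ as a value of $n_{H'}$, whereas the paper's phrase ``hence $d_0$ is a value of $n_{H'}$'' (rather than $-d_0$) appears to be a sign slip, since $d_0\equiv-\Nrd(s)$ and \cite[Lemma~2.6.4]{GS} gives that $\Nrd(s)$, hence $-d_0$, is a reduced norm from $H'$; the paper's final conclusion ($-d_0$ is the common value) is the correct one and matches yours.
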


\begin{proof} 
Assume first there exists an orthogonal involution $\sigma$ on
$M_3(D)$ which has trivial discriminant and trivial Clifford
invariant. The algebra with involution $(M_3(D),\sigma)$ admits a
decomposition as in Theorem~\ref{Pfister.thm}, with $A_0=M_3(H')$ for
some quaternion algebra $H'$. Consider the discriminant $d_0$ of the
involution $\sigma_0$. We have $d_0=-\Nrd_{M_3(H')}(s)$, where $s\in
M_3(H')$ is any invertible skew-symmetric element, hence $d_0$ is a
value of $n_{H'}$ by \cite[Lemma~2.6.4]{GS}. In addition, $d_0=j^2$
for some pure quaternion $j\in H=(d,d_0)$. Therefore $d_0=-n_H^0(j)$,
so the quadratic forms $n_{H'}$ and $n_H^0$ share $-d_0$ as a
common nonzero value.   

To prove the converse, assume $D=H'\otimes H$ for some quaternion
algebras $H$ and $H'$, such that there exists a quaternion
$q\in H'$ and a pure quaternion $j\in H$ satisfying
$n_{H'}(q)=n_H(j)\neq0$. Let $d_0= j^2=-n_{H'}(q)$, and let
$H'_0\subset H'$ be the vector subspace of pure quaternions. Pick an
arbitrary invertible $q_3\in H'_0$. The vector space
$qq_3^{-1}H'_0\subset H'$ has dimension~$3$, hence
\[
  \dim(qq_3^{-1}H'_0\cap H'_0)\geq2.
\]
Since $\dim H'_0=3$, the Witt index of $n_{H'}^0$ is at most~$1$,
hence $qq_3^{-1}H'_0\cap H'_0$ contains anisotropic
vectors. Therefore, there exist $q_1$, $q_2\in H'_0$ invertible such
that $qq_3^{-1}q_2^{-1}=q_1$, i.e., $q=q_1q_2q_3$. Then
$d_0=-n_{H'}(q)$ is the discriminant of the involution adjoint to
the skew-hermitian form $h=\qform{q_1,q_2,q_3}$ over $(H',\ba)$. Pick
a pure quaternion $i$ which anticommutes with $j$, and define
$\rho=\Int(i)\circ \ba$. We get that $H=(d,d_0)$, where
$d=i^2=-n_H(i)$ is the discriminant of the orthogonal involution
$\rho$ on $H$. The involution $\sigma={\ad_h}\otimes\rho$ on
$M_3(H')\otimes H=M_3(D)$ satisfies
\[
  (M_3(D),\sigma) = (M_3(H'),{\ad_h})\otimes(H,\rho)
  \qquad\text{with}\quad [H]=(d,d_0).
\]
Therefore, Theorem~\ref{Pfister.thm} shows that
$e_1(\sigma)=e_2(\sigma)=0$.

If $I^3(F)=0$, then the reduced norm form of every quaternion algebra
represents every nonzero element in $F$, hence the condition holds for
every biquaternion $F$-algebra $D$.
\end{proof}

\begin{ex}
  \label{ex:existence}
  Let $F_0$ be an arbitrary field of characteristic different
  from~$2$, and let $F=F_0(\!(x_1)\!)(\!(y_1)\!)(\!(x_2)\!)(\!(y_2)\!)$ be the field
  of iterated Laurent series in four variables over $F_0$. The
  biquaternion algebra $D=(x_1,y_1)\otimes(x_2,y_2)$ carries a unique
  valuation $v$ extending the $(x_1,\ldots,y_2)$-adic valuation on
  $F$, and it is totally ramified over $F$. We claim that $M_3(D)$
  does not carry any orthogonal involution with trivial discriminant
  and trivial Clifford invariant. To see this as a consequence of
  Corollary~\ref{cor:existence}, consider a decomposition $D=H'\otimes
  H$ into quaternion subalgebras. Let $\Gamma_D$, $\Gamma_{H'}$,
  $\Gamma_H$, $\Gamma_F$ be the value groups of $D$, $H'$, $H$, $F$
  for the valuation $v$, so $\Gamma_F=\mz^4$ and
  $\Gamma_D=(\frac12\mz)^4$. By \cite[Cor.~8.11]{TW} we have
  $\Gamma_D/\Gamma_F=(\Gamma_{H'}/\Gamma_F)\oplus(\Gamma_H/\Gamma_F)$,
  hence $\Gamma_{H'}\cap\Gamma_H=\Gamma_F$. For $x\in {H'}^\times$ we
  have $v(x)=\frac12v\bigl(n_{H'}(x)\bigr)$ by~\cite[Th.~1.4]{TW},
  hence $v\bigl(n_{H'}(x)\bigr)\in2\Gamma_{H'}$. Similarly,
  $v\bigl(n_H(y)\bigr)\in2\Gamma_H$ for $y\in H^\times$. But the
  valuation on $H$ is an ``armature gauge'' as defined
  on~\cite[p.~339]{TW}, which means that for every standard quaternion
  basis $1$, $i$, $j$, $k$ of $H$ and $\lambda_0$, \ldots,
  $\lambda_3\in F$ 
  \[
    v(\lambda_0+\lambda_1i+\lambda_2j+\lambda_3k) =
    \min\{v(\lambda_0), v(\lambda_1i), v(\lambda_2j), v(\lambda_3k)\}.
  \]
  Since $H$ is totally ramified over $F$, $v(1)$, $v(i)$, $v(j)$, and
  $v(k)$ are in different cosets of $\Gamma_D$ modulo $\Gamma_F$.
  Therefore, if $y\in H^\times$ is a pure quaternion, then
  $v(y)\notin\Gamma_F$, hence $v\bigl(n_H(y)\bigr)\in2\Gamma_H
  \setminus 2\Gamma_F$. In conclusion, it is impossible to find
  $x\in{H'}^\times$ and $y\in H_0$ such that $n_{H'}(x)=n_H(y)$,
  because $2\Gamma_{H'}\cap2\Gamma_H = 2\Gamma_F$.
\end{ex}

\subsection{A formula for the $f_3$-invariant}

In the situation of Theorem~\ref{Pfister.thm}, the algebras $H$ and
$A_0$ occurring in the decomposition of $(A,\sigma)$ with
$e_1(\sigma)=e_2(\sigma)=0$ are not uniquely determined, even up to
Brauer-equivalence. Take for instance an arbitrary quaternion algebra
$H=(d,d_0)$ with an orthogonal involution $\rho$ of
discriminant~$(d)$. As $-d_0$ is represented by the reduced norm form
$n_H$, we may argue as in the proof of
Corollary~\ref{cor:existence} to find pure quaternions $q_1$, $q_2$,
$q_3\in H$ such that $n_H(q_1q_2q_3)=-d_0$. On $A_0=M_3(H)$, the
orthogonal involution $\sigma_0$ adjoint to the skew-hermitian form
$\qform{q_1,q_2,q_3}$ has discriminant $(d_0)$. Therefore,
$(A,\sigma)=(A_0,\sigma_0)\otimes(H,\rho)$ satisfies the conditions of
Theorem~\ref{Pfister.thm}. But $A$ is split since $A_0$ and $H_0$ are
Brauer-equivalent, hence $(A,\sigma)\simeq\Ad_\psi$ for some
$12$-dimensional quadratic form $\psi$ with
$e_1(\psi)=e_2(\psi)=0$. By Pfister's result (see~\eqref{eq:Pfister}),
there is a decomposition $\psi\simeq\psi_0\otimes\beta$ for some
$6$-dimensional form $\psi_0$ with $e_1(\psi_0)=0$ and some
$2$-dimensional form $\beta$, hence another decomposition of
$(A,\sigma)$ as in Theorem~\ref{Pfister.thm}:
\[
  (A_0,\sigma_0)\otimes(H,\rho) = (A,\sigma) \simeq {\Ad_{\psi_0}}
  \otimes {\Ad_\beta}.
\]

Nevertheless, we show in this section that the invariant $f_3(\sigma)$
defined in \cite[Def.~2.4]{QT:Peyre} can be calculated from any
decomposition as in Theorem~\ref{Pfister.thm}, and can thus yield some
information on the possible decompositions. The main ingredient of
the proof is Theorem 5.4 in~\cite{QT:Peyre}, which shows that
$f_3(\sigma)$ is the Arason invariant of the sum of the norm forms of
all quaternion algebras in a given decomposition group of
$(A,\sigma)$. Since the $f_3$ invariant is defined only when the
underlying central simple algebra carries a hyperbolic involution, we
need to assume in the following statement, which is the main result of
this section, that the index of $A$ is at most~$2$.  

\begin{thm} 
\label{f3.thm}
Let $(A,\sigma)$ be a central simple algebra of degree $12$ and index
$\leq 2$ with an orthogonal involution with trivial discriminant and
trivial Clifford invariant. Pick a decomposition of $(A,\sigma)$ as in
Theorem~\ref{Pfister.thm},
\[
  (A,\sigma)\simeq (A_0,\sigma_0)\otimes (H,\rho)
\]
where $(A_0,\sigma_0)$ is a central simple algebra with orthogonal
involution of degree~$6$ and $(H,\rho)$ is a quaternion algebra with
orthogonal involution, and $H=(d,d_0)$ with $e_1(\rho)=d$ and
$e_1(\sigma_0)=d_0$. Let $Q$ and $H'$ be the quaternion algebras that
are Brauer-equivalent to $A$ and $A_0$ respectively, and let $n_Q$,
$n_{H'}$, $n_H$ be the reduced norm forms of $Q$, $H'$ and $H$
respectively. With this notation,
\begin{equation}
  \label{eq:f31}
  f_3(\sigma)=e_3(n_Q-n_{H}-\qform d n_{H'})\in H^3(F).
\end{equation}
(Note that $n_Q-n_{H}-\qform d n_{H'}\in I^3F$ because
$[Q]+[H]+[H']=0$.) 
Moreover, if $c\in F^\times$ is such that $H$, $H'$ and $Q$ are all
split by $F(\sqrt{c})$, and $e\in F^\times$ is such that $H=(c,e)$,
then
\begin{equation}
  \label{eq:f32}
  f_3(\sigma)=(de)\cdot [Q] = (de)\cdot[H'].
\end{equation}
\end{thm}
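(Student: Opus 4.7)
The strategy is to apply Theorem~5.4 of~\cite{QT:Peyre}, which expresses $f_3(\sigma)$ as the Arason invariant of a sum of norm forms of quaternion algebras attached to an additive decomposition of $(A,\sigma)$, to the decomposition~\eqref{eq:addec}--\eqref{eq:chgbase} extracted from the given multiplicative one. This should yield formula~\eqref{eq:f31}. Formula~\eqref{eq:f32} will then follow from~\eqref{eq:f31} by a short Pfister-form calculation using the common splitting slot~$c$.

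With the notation of \eqref{eq:addec}--\eqref{eq:chgbase}, the six quaternion algebras are $H_i=(a_id_0,d)$ and $Q_i=(a_i,b_id)$ for $i=1,2,3$, with $H'=(a_i,b_i)$ for each $i$ and $(a_1a_2a_3)=(d_0)$. The first step is to show that
\[
\phi := \sum_{i=1}^{3}\bigl(n_{H_i}+n_{Q_i}\bigr) \equiv n_Q - n_H - \qform{d}\,n_{H'} \pmod{I^4 F}.
\]
I would carry this out by repeated application of the Pfister slot identity $\pform{x,yz} = \pform{x,y} + \qform{y}\pform{x,z}$ in $W(F)$, together with the constraint $(a_1a_2a_3)=(d_0)$, which allows the $a_i$- and $b_i$-dependent contributions to cancel modulo $I^4 F$ and leaves only the expression $n_Q - n_H - \qform{d}\,n_{H'}$.

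For~\eqref{eq:f32}, fix $c$ as in the statement and write $H=(c,e)$, $H'=(c,\eta)$, and $Q=(c,\xi)$ for some $\eta,\xi\in F^\times$; from $[Q]=[H]+[H']$ one obtains $(c)(\xi)=(c)(e\eta)$, so $\xi$ may be taken equal to $e\eta$. The slot identity then yields $n_Q - n_H = \pform{c,e\eta}-\pform{c,e} = \qform{e}\pform{c,\eta}$, hence
\[
n_Q - n_H - \qform{d}\,n_{H'} = (\qform{e}-\qform{d})\pform{c,\eta} = \qform{e}\pform{de,c,\eta}\quad\text{in }W(F).
\]
Since $\pform{de,c,\eta}$ is a $3$-fold Pfister form, scaling by $\qform{e}$ alters it by $\pform{e,de,c,\eta}\in I^4 F$, so $e_3(n_Q - n_H - \qform{d}\,n_{H'}) = (de)\cdot(c)(\eta) = (de)\cdot[H']$. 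The identity $(de)\cdot[Q]=(de)\cdot[H']$ reduces to $(de)\cdot[H]=0$: the element $i\in H=(d,d_0)$ with $i^2=d$ is a pure quaternion of reduced norm $-d$, so $-d$ is represented by $n_H=\pform{d,d_0}$, giving $(-d)\cdot[H]=0$; and the relation $(x)\cdot(x)=(-1)\cdot(x)$ in $H^2(F)$ reduces $(de)\cdot[H]$ to $(-d)\cdot[H]$.

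The main obstacle is the first step, where one must match the sum $\phi$ provided by \cite[Th.~5.4]{QT:Peyre} with $n_Q-n_H-\qform{d}\,n_{H'}$ modulo $I^4 F$. The six summands $n_{H_i}$, $n_{Q_i}$ do not a priori lie in $I^3 F$, and the relation $(a_1a_2a_3)=(d_0)$ must be exploited carefully alongside the Pfister slot identity to produce the cancellations. An alternative---which may simplify the bookkeeping---is to first refine the additive decomposition by adapting $a_i$, $b_i$ to the common splitting slot~$c$, so that each $H_i$ and $Q_i$ has $c$ as one of its slots and the sum reduces to a direct telescoping of $3$-fold Pfister forms.
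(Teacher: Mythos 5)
Your overall strategy matches the paper's: both start from the additive decomposition \eqref{eq:addec}--\eqref{eq:chgbase}, invoke \cite[Th.~5.4]{QT:Peyre} to express $f_3(\sigma)$ as $e_3$ of a sum of quaternion norm forms indexed by the decomposition group, and then reduce modulo $I^4F$ via the Pfister slot identity. The treatment of~\eqref{eq:f32} is likewise the same calculation with a change of variable names ($\eta$ for $e'$), and is correct.

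However, there is a genuine error in your set-up for~\eqref{eq:f31}. The decomposition group attached to \eqref{eq:addec}--\eqref{eq:chgbase} is $\{0,\,[Q],\,[Q_1],\,[H_1],\,[Q_2],\,[H_2],\,[Q_3],\,[H_3]\}$, so it contains \emph{seven} nontrivial Brauer classes: you have omitted $[Q]=[A]$ itself. Theorem~5.4 of \cite{QT:Peyre} therefore gives
\[
  f_3(\sigma)=e_3\Bigl(n_Q+\sum_{i=1}^3 n_{H_i}+\sum_{i=1}^3 n_{Q_i}\Bigr),
\]
whereas you take $\phi:=\sum_{i=1}^3(n_{H_i}+n_{Q_i})$ and then claim $\phi\equiv n_Q-n_H-\qform d n_{H'}\bmod I^4F$. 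That congruence cannot hold when $\ind A=2$: the left-hand side has $e_2(\phi)=\sum_i([H_i]+[Q_i])=3[Q]=[Q]\neq 0$, so $\phi\notin I^3F$, while the right-hand side lies in $I^3F$. The correct statement to prove is $n_Q+\phi\equiv n_Q-n_H-\qform d n_{H'}\bmod I^4F$, equivalently $\phi\equiv -n_H-\qform d n_{H'}\bmod I^4F$; once you restore $n_Q$ to the sum, the Pfister manipulations you sketch (expanding $n_{H_i}$ and $n_{Q_i}$ via $\pform{x,yz}=\pform{x,y}+\qform y\pform{x,z}$ and then using $(a_1a_2a_3)=(d_0)$ and $[H]=(d,d_0)$ so that $(d_0)\cdot[H]$ and $(-1,d_0,d)$ cancel) do yield~\eqref{eq:f31}, exactly as in the paper. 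Note also that you leave that reduction at the level of a sketch (``I would carry this out by\ldots''), so even with the sign-count corrected, the cancellation of $(d_0)\cdot[H]+(-1,d_0,d)$ is a nontrivial step that you have not actually verified.
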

 
\begin{proof}
  Consider the additive decomposition of $(A,\sigma)$
  in~\eqref{eq:addec}. Together with~\eqref{eq:chgbase}, it shows that
\[
  \{0,\,[Q],\,[Q_1],\,[H_1],\,[Q_2],\,[H_2],\,[Q_3],\,[H_3]\}
\]
  is a decomposition group of $(A,\sigma)$ as defined
  in~\cite[Def.~3.6]{QT:Peyre}. As a result, Theorem 5.4
  in~\cite{QT:Peyre} yields  
  \[
    f_3(\sigma)= e_3\bigl(n_Q+\sum_{i=1}^3 n_{H_i}+\sum_{i=1}^3
    n_{Q_i}\bigr).
  \]
  In order to compute the Arason invariant of this quadratic form, we
  use the following identity in the Witt group of $F$:  
  \[
    \pform{\lambda,\mu\nu}=
    \pform{\lambda,\mu}+\qform{\mu}\pform{\lambda,\nu}. 
  \]
 In particular, it shows that for $i=1$, $2$, and $3$, we have 
 \[
   n_{H_i}=\pform{a_i,d}+\qform{a_i}n_{H}\quad \text{and} \quad 
   n_{Q_i}=\pform{a_i,d}+\qform{d}n_{H'}.
 \]
  Therefore,
  \begin{equation}
    \label{eq:f1}
    \sum_{i=1}^3 n_{H_i}+\sum_{i=1}^3 n_{Q_i}= 
    \qform{a_1,a_2,a_3} n_H+\qform{d,d,d}n_{H'}+
    \sum_{i=1}^3\pform{-1,a_i,d}.
  \end{equation}
  Recall that $(d_0)=(a_1a_2a_3)$, hence
  \[
    \qform{a_1,a_2,a_3}n_H\equiv \qform{-d_0}n_H \bmod I^4F.
  \]
  Similarly,
  \[
    \qform{d,d,d}n_{H'} \equiv \qform{-d}n_{H'} \bmod I^4F.
  \]
  Therefore, \eqref{eq:f1} yields
  \begin{align*}
    e_3\bigl(n_Q+\sum_{i=1}^3 n_{H_i}+\sum_{i=1}^3
    n_{Q_i}\bigr) & = e_3(n_Q-\qform{d_0}n_H - \qform{d}n_{H'})
                    +\sum_{i=1}^3(-1,a_i,d)\\
    & = e_3(n_Q-n_H-\qform d n_{H'}) + (d_0)\cdot[H] + (-1,d_0,d).
  \end{align*}
  Now, since $H=(d,d_0)$ and $(d_0,d_0)=(-1,d_0)$, the last two terms
  on the right side of the last displayed equation cancel, and 
  Formula \eqref{eq:f31} is proved.

  To obtain Formula~\eqref{eq:f32}, choose $c\in F^\times$ such that
  $F(\sqrt c)$ splits $Q$, $H$, and $H'$, and let $e$, $e'\in
  F^\times$ be such that $H=(c,e)$ and $H'=(c,e')$, hence
  $Q=(c,ee')$. Then
  \begin{align*}
    n_Q-n_H-\qform d n_{H'} & = \pform{c,ee'} - \pform{c,e} - \qform d
                              \pform{c,e'}\\
    & = \pform c \qform{e,-ee',-d,de'}\\
    & = \qform e \pform{c,e',de}.
  \end{align*}
  Therefore, $f_3(\sigma)=(c,e',de)=(de)\cdot [H']$. As
  $H=(c,e)=(d,d_0)$, we have
  \[
  (d)\cdot [H] = (-1)\cdot [H] = (e)\cdot [H],
  \]
  hence $(de)\cdot [H]=0$ and $(de)\cdot [H'] = (de)\cdot
  [Q]$. Formula~\eqref{eq:f32} is thus proved. 
\end{proof}
 
\begin{cor}
  With the notation of Theorem~\ref{f3.thm}, we have $f_3(\sigma)=0$
  if any of the following conditions holds:
  \begin{enumerate}
  \item[(i)]
    $A$ is split;
  \item[(ii)]
    $A_0$ is split;
  \item[(iii)]
    $A_0$ is split by $F(\sqrt{d_0})$.
  \end{enumerate} 
\end{cor}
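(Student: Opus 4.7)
The plan is to apply Theorem~\ref{f3.thm}, specifically formula~\eqref{eq:f32}, choosing the auxiliary element $c$ appropriately in each of the three cases.

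For cases~(i) and~(ii), I would take $c = d$. The field $F(\sqrt{d})$ splits $H = (d,d_0)$ tautologically. From the relation $[A] = [A_0] + [H]$ in the Brauer group, i.e., $[Q] = [H'] + [H]$, we see that in case~(i) where $[Q] = 0$ we have $[H'] = [H]$, while in case~(ii) where $[H'] = 0$ we have $[Q] = [H]$; in either situation $F(\sqrt{d})$ splits $H$, $H'$, and $Q$ simultaneously, so $c = d$ is admissible in Theorem~\ref{f3.thm}. Formula~\eqref{eq:f32} then reads $f_3(\sigma) = (de)\cdot[Q] = (de)\cdot[H']$, which vanishes because $[Q] = 0$ in~(i) and $[H'] = 0$ in~(ii).

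For case~(iii), I would instead take $c = d_0$. The algebra $H = (d,d_0)$ is automatically split by $F(\sqrt{d_0})$, since $d_0$ becomes a square there; $H'$ is split by $F(\sqrt{d_0})$ by assumption (as $H'$ is Brauer-equivalent to $A_0$); and $Q$, Brauer-equivalent to $H'\otimes H$, is therefore also split by $F(\sqrt{d_0})$. So $c = d_0$ is admissible. By the symmetry $(d,d_0) = (d_0,d)$ of quaternion symbols in $H^2(F)$, we may write $H = (d_0,d)$ and take $e = d$. Formula~\eqref{eq:f32} then yields $f_3(\sigma) = (d^2)\cdot[H'] = 0$, since $d^2$ is a square.

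There is no significant obstacle here; the argument is a direct application of Theorem~\ref{f3.thm}. The only real point is recognizing that in case~(iii) the natural choice $c = d_0$ forces the scalar $de$ to equal $d^2$, hence to be trivial in $H^1(F)$, so that the formula for $f_3(\sigma)$ collapses to zero.
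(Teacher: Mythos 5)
Your proposal is correct and takes essentially the same route as the paper: for case~(iii) the paper uses exactly your substitution $c=d_0$, $e=d$ in Formula~\eqref{eq:f32}. For cases~(i) and~(ii) the paper simply observes that $(de)\cdot[Q]=(de)\cdot[H']$ vanishes as soon as $[Q]=0$ or $[H']=0$, for any admissible $c$; your specific choice $c=d$ is valid but not needed, since the common quadratic splitting field required by Theorem~\ref{f3.thm} always exists here (because $Q$ has index at most $2$, so $H$ and $H'$ are linked).
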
 

\begin{proof} 
  Formula~\eqref{eq:f32} readily shows that $f_3(\sigma)=0$ when~(i)
  or (ii) holds. In case~(iii) we may take $c=d_0$ and $e=d$ in
  Formula~\eqref{eq:f32} to obtain $f_3(\sigma)=0$.

  Alternatively, in case~(i) we may argue that $(A,\sigma)=\Ad_\psi$
  for some quadratic form $\psi\in I^3F$, hence
  $e_3(\sigma)=e_3(\psi)\in H^3(F,\mu_2)$ and therefore
  $f_3(\sigma)=0$ by definition. Also, in case~(ii) $(A,\sigma)$ is
  split and hyperbolic over $F(\sqrt d)$, hence $f_3(\sigma)=0$ by
  \cite[Prop.~5.6]{QT:Peyre}.
\end{proof} 
 
By contrast, $f_3(\sigma)$ does not necessarily vanish when $H$ is
split. In that case we may choose $e=1$ in Formula~\eqref{eq:f32} and
derive the following: if $(A,\sigma)=(A_0,\sigma_0)\otimes\Ad_{\pform
  d}$ and $e_1(\sigma_0)=(d_0)$ is such that $(d,d_0)$ is split, then
\[
  f_3(\sigma)=(d)\cdot [A_0].
\]
This also follows from~\cite[Cor.~2.18]{QT:Peyre}.

\end{document}